\newtheorem{theorem}{Theorem}
\newtheorem{corollary}{Corollary}
\newtheorem{proposition}{Proposition}
\newtheorem{definition}{Definition}
\newtheorem{example}{Example}
\newenvironment{proof}[1][Proof]{\noindent \textbf{#1.} }{\  \rule{0.5em}{0.5em}}
\journal{}
\begin{document}

\begin{frontmatter}
 
\title{Lexicographic products and lexicographic powers of graphs - a walk matrix approach}

\author{Domingos M. Cardoso$^{\dag}$ $^{\ddag}$, Paula Carvalho$^{\dag}$ $^{\ddag}$, Helena Gomes$^{\dag}$ $^{\S}$, Sofia J. Pinheiro$^{\dag}$ $^{\ddag}$, Paula Rama$^{\dag}$ $^{\ddag}$}

\address{$^{\dag}$CIDMA - Centro de Investiga\c{c}\~{a}o e Desenvolvimento em Matem\'{a}tica e Aplica\c{c}\~{o}es}

\address{$^{\ddag}$Departamento de Matem\'{a}tica, Universidade de Aveiro, Aveiro, Portugal.}
\address{$^{\S}$Escola Superior de Educa\c{c}\~{a}o, Instituto Polit\'ecnico de Viseu, Viseu, Portugal}

\begin{abstract}

\noindent  The characteristic polynomial and the spectrum of the lexicographic product of graphs $H[G]$, a specific instance of the generalized composition (also called $H$-join), are explicitly determined for arbitrary graphs $H$ and $G$, in terms of the eigenvalues of $G$ and an $H[G]$ associated matrix $\widetilde{{\bf W}}$, which relates $H$ with $G$. This study also establishes conditions under which a main eigenvalue of $G$ is a main or non-main eigenvalue of the matrix $\widetilde{{\bf W}}$, when the nullity of the graph $H$ is $\eta>0$. In such a case, we prove that every main eigenvalue of $G$ is an eigenvalue of $\widetilde{{\bf W}}$ with multiplicity at least $\eta$ which is non-main for $\bf \widetilde{W}$ if and only if $0$ is a non-main eigenvalue of $H$.
Furthermore, the spectra of the lexicographic powers of arbitrary graphs $G$ are analysed by applying the obtained results.
\medskip

\noindent \textbf{Keywords: }{\footnotesize H-join operation, lexicographic product, graph spectra.}
\smallskip

\noindent \textbf{MSC 2020:} {\footnotesize 05C50, 05C76.}

\end{abstract}
\end{frontmatter}

\section{Introduction}

The generalized composition of a family of graphs $\mathcal{G}=\{G_1, \dots, G_p\}$, $H[G_1, \dots, G_p]$, where $H$ is a graph of order $p$,  was introduced in 1974 by Allen Schwenk \cite{Schwenk1974}. In \cite[Th. 7]{Schwenk1974}, assuming that $G_1, \dots, G_p$ are all regular graphs and taking into account that $V(G_1) \cup \dots \cup V(G_p)$ is an equitable partition $\pi$, the spectrum
of $H[G_1, \dots, G_p]$ is determined in terms of the spectra of the graphs $G_1, \dots, G_p$ and the matrix associated with $\pi$.

The generalized composition $H[G_1, \dots, G_p]$ was rediscovered in \cite{Cardoso_et_al2013} under the designation of $H$-join of a family of graphs $\mathcal{G}=\{G_1, \dots, G_p\}$ (see also \cite[Def. 2.5]{CardosoGomesPinheiro2022}).
Using a generalization of Fiedler's result \cite[Lem. 2.2]{Fiedler1974} obtained in \cite[Th. 3]{Cardoso_et_al2013}, the spectrum of the $H$-join of a family of regular graphs and the Laplacian spectrum of a family of arbitrary graphs are determined in \cite[Th. 5 and Th. 8]{Cardoso_et_al2013}.

In \cite{SaravananMuruganArunkumar2020}, as an application of a new generalization of Fiedler's lemma, the characteristic polynomial of the universal adjacency matrix of the $H$-join of a family of arbitrary graphs is determined in terms of the characteristic polynomials and rational functions of the components, and the determinant of an associated matrix whose diagonal entries are also rational functions. These rational functions are related to the main eigenvalues and main angles of the components of the $H$-join.\\

In \cite[p. 167]{Schwenk1974}, Schwenk wrote the following remark: In general, it does not appear likely that the characteristic polynomial of the generalized composition $H[G_1, \dots, G_p]$ can always be expressed in terms of the characteristic polynomials of the graphs $H, G_1, \dots, G_p$.\\

Recently, the problem raised by this remark was solved in \cite{CardosoGomesPinheiro2022}, considering an arbitrary graph $H$ with $p$ vertices and a family of arbitrary graphs $\mathcal{G}=\{G_1, \dots, G_p\}$, based on a walk matrix approach, the authors obtained an expression for the determination of the characteristic polynomial, as well as the entire spectrum of the universal adjacency matrix of the $H$-join in terms of the characteristic polynomials or spectra of the universal adjacency matrix of the graphs $G_1, \dots, G_p$ and an associated matrix which relates these graphs and the graph $H$.\\

The lexicographic product of the graphs $H$ and $G$, introduced by Harary in \cite{harary1} and Sabidussi in \cite{sabidussi} (see also \cite{hammack_et_al,harary2}), is denoted by $H[G]$ and is also called composition, has as vertex set of the cartesian product $V(H) \times V(G)$ and a vertex $(h_1,g_1)$ is adjacent to a vertex $(h_2,g_2)$ if either $h_1$ is adjacent to $h_2$ in $H$ or $h_1=h_2$ and $g_1$ is adjacent to $g_2$ in $G$ (see \cite{hammack_et_al,harary2}). It is immediate to conclude that the lexicographic product $H[G]$ coincides with the $H$-join of $\mathcal{G}$, where the graphs of the family $\mathcal{G}$ are all isomorphic to a fixed graph $G$. From the definition, it follows that this graph operation is not commutative. As referenced in \cite{barik_et_al}, if the graphs $F$ and $H$ are two nontrivial graphs with at least two vertices each, then $F[H]$ is connected if and only if $F$ is connected. Therefore, $F[H] \ncong H[F]$ (that is, $F[H]$ is not isomorphic to $H[F]$), whenever one of $F$ or $H$ is disconnected. Furthermore, there are connected graphs $F$ and $H$ such that $F[H] \ncong H[F]$. Considering the example given in \cite{barik_et_al}, $P_3[K_2] \ncong K_2[P_3]$, since $P_3[K_2]$ has $11$ edges and $K_2[P_3]$ has $13$ edges.

However, the lexicographic product is associative. In particular, is power associative, since the $k$-th lexicographic power of $H$, $H^k$, is the lexicographic product of $H$ by itself $k$ times (then $H^2=H[H], H^3=H[H^2]=H^2[H], \dots$).

In \cite{WangWong2018}, considering arbitrary graphs $H$ and $G$, the authors obtained the characteristic polynomial of the lexicographic product $H[G]$, not only related to $H$  and $G$, but also to eigenspaces of $G$. However, if $H$ has only one or two main eigenvalues, they show that the characteristic polynomial of $H[G]$ can be independent of eigenspaces of $G$. As an application, the spectrum of $H[G]$ is obtained explicitly when $G$ has exactly one or two main eigenvalues.\\

In this paper, we consider arbitrary graphs $H$ and $G$ of order $p$ and $n$, respectively. The characteristic polynomial and the spectrum of the lexicographic product $H[G]$ are explicitly determined in terms of the eigenvalues of $G$ and an associated matrix $\widetilde{{\bf W}}$ which relates $H$ with $G$. Additionally, we prove that every main eigenvalue of $G$ is an eigenvalue of $\widetilde{{\bf W}}$ with multiplicity at least the nullity of $H$. Furthermore, these eigenvalues are non-main for $\widetilde{{\bf W}}$ if and only if $0$ is a non-main eigenvalue of $H$. Finally, in the last section, the spectra of the lexicographic powers of arbitrary graphs $H^k$ are analysed by applying the obtained results.

\section{Preliminary results and concepts}

Throughout this paper, we will consider undirected simple and finite graphs $H$ and $G$, with vertex sets $V(H)=\{1, \dots, p\}$ and $V(G)=\{1,2,\ldots, n\}$ and edge sets $E(H)$ and $E(G)$, respectively. An edge linking the vertices $i$ and $j$ of $V(G)$ is denoted by $ij\in E(G)$, and in this case we say that $i$ and $j$ are \textit{adjacent}. For each vertex $i \in V(G),$ $N_{G}(i)$
denotes its \textit{neighbourhood}, that is the set of vertices of $G$ which are adjacent to~$i$ and $\left \vert N_{G}(i)\right \vert$ is called the \textit{degree} of  $i$ and denoted by $d_G(i)$.

The \textit{adjacency} matrix $A(G)=[a_{ij}]$ of $G$ is the symmetric matrix such that $a_{ij}=1$ if $ij\in E(G)$ and $0$, otherwise.  The multiset of eigenvalues of $A(G)$ (called the \textit{spectrum} of~$G$) is defined as $\sigma(G) = \big\{\mu_1^{[m_1]}, \mu_2^{[m_2]}, \ldots, \mu_t^{[m_t]}\big\}$, where $\mu_i^{[m_i]}$ means that the eigenvalue $\mu_i$ has algebraic multiplicity $m_i$. The \textit{eigenspace} of $\lambda \in \sigma(G)$, denoted $\mathcal{E}_G(\lambda)$, is defined as the kernel (or null space) of the matrix $A(G) - \lambda I_{n}$, where $I_{n}$ is the $n \times n$ identity matrix, and for any square matrix $B$, $\ker(B)$ represents its kernel.\\

We say that a graph $H$ has nullity $\eta$ when $0$ is an eigenvalue of $H$ with multiplicity $\eta$, that is, the eigenspace $\mathcal{E}_H(0)$ has dimension $\eta$.\\

Each of the distinct eigenvalues $\mu_{1}, \mu_{2}, \ldots, \mu_{s}$, with $s \le t$, of a graph $G$ whose eigenspace $\mathcal{E}_G(\mu _i)$ is not orthogonal to the all-1 vector with $n$ entries $\textbf{j}_n$ is said to be \textit{main}; otherwise, it is \textit{non-main}. The concept of main (non-main) eigenvalue was introduced by Cvetkovi\' c in \cite{cvetkov70} and was further investigated in several publications. Rowlinson's work \cite{rowmain} provides a survey on the main eigenvalues of a graph. The $H$-join \cite{Cardoso_et_al2013} of a family of finite undirected simple graphs $\mathcal{G}=\{G_1, \dots, G_p\}$, denoted $\bigvee_{H}{\mathcal{G}}$ (or, alternatively, $H[G_1, \dots, G_p]$), is the graph obtained from the graph $H$ replacing each vertex $i$ of $H$ by $G_i$ and adding to the edges of all graphs in $\mathcal{G}$ the edges of the join $G_i \vee G_j$, for every edge $ij \in E(H)$.
In \cite{CardosoGomesPinheiro2022, SaravananMuruganArunkumar2020} the spectrum of the universal adjacency matrix of the $H$-join of a family of arbitrary graphs $\mathcal{G}$ is determined.\\

In \cite[Ex. 3.4]{CardosoGomesPinheiro2022}, we considered the $H$-join $\bigvee_{P_3}{\{K_{1,3}, K_2, P_3\}}$, that is, $P_3[K_{1,3}, K_2, P_3]$. Taking into account that $\sigma(K_{1,3})=\{\sqrt{3},-\sqrt{3},0^{[2]}\},$ $\sigma(K_2)=\{1,-1\}$, $\sigma(P_3)=\{\sqrt{2},-\sqrt{2},0\}$ and the main characteristic polynomials of the graph components are $m_{K_{1,3}}(x) =  x^2 - 3$, $m_{K_2}(x) = x  - 1$ and $m_{P_3}(x) = x^2 - 2$, respectively, it was concluded that the characteristic polynomial of this $H$-join is
$$
\phi(P_3[K_{1,3}, K_2, P_3])=x^3(x+1)\phi({\bf \widetilde{W}})=x^3(x+1)(-42 - 40 x + 15 x^2 + 19 x^3 + x^4 - x^5),
$$
 where $\widetilde{\bf W}$ is an H-join associated matrix, as defined in \cite[Def. 3.1]{CardosoGomesPinheiro2022}, and recalled later in this text.\\

 The isomorphism relation between graphs is denoted $\cong$ and thus $H \cong G$ means that the graphs $H$ and $G$ are isomorphic.

\section{Lexicographic products of graphs}
Let us determine the spectrum of the lexicographic product $H[G]$, where $H$ is a graph of order $p$ and $G$ is a graph of order $n$ such that the main characteristic polynomial is $m_G(x)=x^s-\sum_{j=0}^{s-1}{c_jx^j}$. From now on, given a graph $H$, we consider the following notation
$$
\delta_{i,j}(H) = \left\{\begin{array}{ll}
                          1 & \hbox{if } ij \in E(H), \\
                          0 & \hbox{otherwise.}
                          \end{array}
                          \right.
$$

\begin{definition}\label{lex_product}
Consider the graph $H$ with vertex set $V(H)=\{1,2, \dots, p\}$ and the graph $G$ of order $n$. Then the lexicographic product $H[G]$ is the $H$-join of $\mathcal{G}$, where $\mathcal{G}$ is a family of $p$ graphs all of which are isomorphic to $G$.
\end{definition}

When the graph $G$ is regular, the spectrum of $H[G]$ is easily determined by direct application of \cite[Th. 5]{Cardoso_et_al2013}.

\begin{proposition}\label{prop_lex_product}
Let $H$ be a graph of order $p$ such that $\sigma(H)\!=\!\{\lambda_1^{[m_1]}, \dots, \lambda_t^{[m_t]}\}$, where $t \le p$ and $m_1 + \dots + m_t = p$, and let $G$ be a $k$-regular graph of order $n$. Then
$$
\sigma(H[G]) = \left(\bigcup_{j=1}^{p}{(\sigma(G_j)\setminus \{k\})}\right) \cup \sigma(\widetilde{C}),
$$
where $\widetilde{C} = \left(\begin{array}{cccc}
                                 k & \delta_{1,2}(H) n & \dots & \delta_{1,p}(H) n\\
                                   \delta_{2,1}(H) n & k & \dots & \delta_{2,p}(H) n\\
                                   \vdots    &  \vdots   &\ddots & \vdots\\
                                   \delta_{p,1}(H) n & \delta_{p,2}(H) n &\ddots & k\\
                            \end{array}\right) = n A(H) + k I_p$,
and thus $\sigma(\widetilde{C}) = \{(n\lambda_1+k)^{[m_1]}, \dots, (n\lambda_t+k)^{[m_t]}\}$.
\end{proposition}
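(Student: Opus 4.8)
The plan is to recognise this as the special case $G_1 \cong \cdots \cong G_p \cong G$ of the Fiedler-type spectral formula for the $H$-join of regular graphs, \cite[Th. 5]{Cardoso_et_al2013}, and to make the multiplicity bookkeeping transparent by exhibiting the eigenvectors directly. First I would record the block structure of the adjacency matrix: ordering the $pn$ vertices by copies of $G$, the diagonal blocks are copies of $A(G)$ and the $(i,j)$ off-diagonal block is $\delta_{i,j}(H)\,J_n$, where $J_n=\mathbf{j}_n\mathbf{j}_n^{\top}$ is the all-ones matrix of order $n$. Compactly,
$$
A(H[G]) = I_p \otimes A(G) + A(H)\otimes J_n .
$$

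The decisive observation is that, $G$ being $k$-regular, the decomposition $\mathbb{R}^n=\langle \mathbf{j}_n\rangle \oplus \mathbf{j}_n^{\perp}$ is invariant under both $A(G)$ and $J_n$, so it induces a splitting of $\mathbb{R}^{pn}$ into two $A(H[G])$-invariant subspaces. On $\mathbb{R}^p\otimes \mathbf{j}_n^{\perp}$ I would use that $J_n\mathbf{v}=0$ for $\mathbf{v}\perp\mathbf{j}_n$: if $A(G)\mathbf{v}=\mu\mathbf{v}$ with $\mathbf{v}\perp\mathbf{j}_n$, then $A(H[G])(\mathbf{e}_i\otimes\mathbf{v})=\mu\,(\mathbf{e}_i\otimes\mathbf{v})$ for each $i$. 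Letting $\mathbf{v}$ run over an eigenbasis of $A(G)\big|_{\mathbf{j}_n^{\perp}}$ and $i$ over $\{1,\dots,p\}$ yields $p(n-1)$ eigenvectors accounting for $\bigcup_{j=1}^{p}\big(\sigma(G_j)\setminus\{k\}\big)$. On the complementary subspace $\mathbb{R}^p\otimes\langle\mathbf{j}_n\rangle$ I would invoke $A(G)\mathbf{j}_n=k\mathbf{j}_n$ and $J_n\mathbf{j}_n=n\mathbf{j}_n$ to obtain
$$
A(H[G])(\mathbf{u}\otimes\mathbf{j}_n) = \big((kI_p+nA(H))\mathbf{u}\big)\otimes\mathbf{j}_n ,
$$
so that $A(H[G])$ acts on this $p$-dimensional subspace exactly as $\widetilde{C}=nA(H)+kI_p$; since $\widetilde{C}$ and $A(H)$ share eigenvectors, $\sigma(\widetilde{C})=\{(n\lambda_1+k)^{[m_1]},\dots,(n\lambda_t+k)^{[m_t]}\}$, as claimed.

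Finally I would note that the two invariant subspaces are orthogonal and together span $\mathbb{R}^{pn}$, so the multiset union of the two eigenvalue lists is the full spectrum of $H[G]$. The only point demanding care is the accounting of the regular eigenvalue $k$: when $G$ is disconnected, $k$ can reappear in $\sigma(A(G)\big|_{\mathbf{j}_n^{\perp}})$, so the notation $\sigma(G_j)\setminus\{k\}$ must be read as a multiset difference removing a single copy of $k$ (the one carried by $\mathbf{j}_n$). With that convention the counts agree, and no genuine obstacle arises beyond this bookkeeping, which is exactly why the result follows by direct application of the cited theorem.
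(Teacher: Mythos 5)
Your proposal is correct. It differs in route from the paper, which offers no argument beyond the one-line appeal to \cite[Th. 5]{Cardoso_et_al2013} (the spectral formula for the $H$-join of regular graphs, itself proved there via a generalization of Fiedler's lemma): you instead give a self-contained proof by writing $A(H[G]) = I_p \otimes A(G) + A(H)\otimes J_n$ and splitting $\mathbb{R}^{pn}$ into the two invariant subspaces $\mathbb{R}^p\otimes \mathbf{j}_n^{\perp}$ and $\mathbb{R}^p\otimes\langle\mathbf{j}_n\rangle$. This Kronecker-product argument is available precisely because all $p$ components of the join are the same regular graph $G$, which is why it comes out cleaner than the general $H$-join machinery; it buys transparency in the multiplicity bookkeeping and correctly isolates the one genuinely delicate point, namely that $\sigma(G_j)\setminus\{k\}$ must be read as a multiset difference removing a single copy of $k$ when $G$ is disconnected. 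The paper's citation-only approach buys brevity and consistency with the walk-matrix/$H$-join framework used for the non-regular case later, at the cost of leaving that convention implicit. Both are valid; your verification that the eigenvector counts sum to $p(n-1)+p=pn$ closes the argument completely.
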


\begin{example}
Let us assume that $H \cong K_{1,2}$ and $G \cong C_4$. Since $\sigma(H) = \{\sqrt{2}, 0, -\sqrt{2}\}$ and $\sigma(G)=\{2, 0^{[2]},-2\}$, applying Proposition~\ref{prop_lex_product}, we obtain
\begin{eqnarray*}
\sigma(H[G]) &=& \{0^{[6]}, (-2)^{[3]}\} \cup \sigma\left(4\left(\begin{array}{ccc}
                                                               0 & 1 & 0\\
                                                               1 & 0 & 1\\
                                                               0 & 1 & 0
                                                              \end{array}\right)+2\left(\begin{array}{ccc}
                                                                                         1 & 0 & 0\\
                                                                                         0 & 1 & 0\\
                                                                                         0 & 0 & 1                                                                             \end{array}\right)\right)\\
           &=& \{0^{[6]}, (-2)^{[3]}\} \cup 4\{\sqrt{2}, 0, -\sqrt{2}\} + 2\\
           &=& \{0^{[6]}, (-2)^{[3]}\} \cup \{4\sqrt{2}+2, 2, -4\sqrt{2}+2\}\\
           &=& \{2(1+2\sqrt{2}), 2, 0^{[6]}, 2(1-2\sqrt{2}), (-2)^{[3]}\}.
\end{eqnarray*}
\end{example}

For the application of the $H$-join to $H[G]$, we need the definition of the $H$-join associated matrix introduced in \cite{CardosoGomesPinheiro2022} in the particular case of $H[G]$ which is herein called $H[G]$ associated matrix.

\begin{definition}\label{main_def}
Consider the lexicographic product $H[G]$ as in Definition~\ref{lex_product}. Consider also that the main eigenvalues of $G$ are $\mu_{1}, \dots, \mu_{s}$ and the main characteristic polynomial is $m_{G}(x)=x^s - c_{0} - c_{1}x - \cdots - c_{s-1} x^{s-1}$. Let ${\bf W}_{G}$ be the walk matrix of $G$ and, denoting $\delta_{i,j}=\delta_{i,j}(H)$, consider the matrices
{\footnotesize $$
\hspace{-1cm}\widetilde{{\bf W}}_i = \left(\begin{array}{ccccccccccc}
\overbrace{\delta_{i,1}{\bf j}^T_{n}{\bf W}_{G}}^{s\text{ columns}}&\cdots&\overbrace{\delta_{i,i-1}{\bf j}^T_{n}{\bf W}_{G}}^{s\text{ columns}}&      0    &   0  &\cdots&  0   &  c_{0} & \overbrace{\delta_{i,{i+1}}{\bf j}^T_{n}{\bf W}_{G}}^{s\text{ columns}} &\cdots&\overbrace{\delta_{i,p}{\bf j}^T_{n}{\bf W}_{G}}^{s\text{ columns}}\\
                  {\bf 0}                                          &\cdots& {\bf 0}&      1    &   0  &\cdots&  0   &  c_{1}  &{\bf 0}&\cdots&      {\bf 0}    \\
                  {\bf 0}                                          &\cdots&{\bf 0}&      0    &   1  &\cdots&  0   &  c_{2}  &{\bf 0}&\cdots&      {\bf 0}    \\
                  \vdots                                           &\ddots&\vdots&   \vdots  &\vdots&\ddots&\vdots&  \vdots   &\vdots&\ddots&      \vdots     \\
                  {\bf 0}                                          &\cdots&{\bf 0}&      0    &   0  &\cdots&  1   &c_{s-1} &{\bf 0}&\cdots&      {\bf 0}
\end{array}\right),
$$}
for $1 \le i \le p$, where ${\bf 0}$ is a row vector with $s$ entries equal to $0$.  Then the $H[G]$ associated matrix is the $ps \times ps$ matrix
\begin{equation}\label{lex_prod_associated_matrix}
{\bf \widetilde{W}} = \left(\begin{array}{c}
                            {\bf \widetilde{W}}_1\\
                            {\bf \widetilde{W}}_2\\
                            \vdots          \\
                            {\bf \widetilde{W}}_p
                            \end{array}\right).
\end{equation}
\end{definition}

According to Definition~\ref{main_def}, the submatrices in \eqref{lex_prod_associated_matrix} are\\

{\small \begin{eqnarray*}
\widetilde{\bf W}_1 &=& \left(\begin{array}{ccccccccc}
0 &   0   &\dots &   0   & c_0   & \delta_{1,2}{\bf j}^T_n{\bf W}_G &\dots  &\delta_{1,p-1}{\bf j}^T_n{\bf W}_G & \delta_{1,p}{\bf j}^T_n{\bf W}_G\\
1 &   0   &\dots &   0   & c_1   &          {\bf 0}            &\dots  &       {\bf 0}      &      {\bf 0}    \\
0 &   1   &\dots &   0   & c_2   &          {\bf 0}            &\dots  &       {\bf 0}      &      {\bf 0}    \\
\vdots &\vdots &\ddots&\vdots &\vdots &      \vdots            &\ddots &        \vdots      &      \vdots     \\
0 &   0   &\dots &   1   &c_{s-1}&          {\bf 0}            &\dots  &        {\bf 0}     &      {\bf 0}
                        \end{array}\right),\\
\vdots         &\vdots& \hspace{6.2cm} \vdots \\
\widetilde{\bf W}_{p} &=& \left(\begin{array}{ccccccccc}
\delta_{p,1}{\bf j}^T_n{\bf W}_G &\delta_{p,2}{\bf j}^T_n{\bf W}_G &\dots & \delta_{p,p-1}{\bf j}^T_n{\bf W}_G & 0 & 0 &\dots& 0 & c_0 \\
          {\bf 0}                  &              {\bf 0}              &\dots & {\bf 0} & 1 & 0 &\dots& 0 & c_1 \\
          {\bf 0}                  &              {\bf 0}              &\dots & {\bf 0} & 0 & 1 &\dots& 0 & c_2 \\
          \vdots                   &               \vdots              &\ddots&\vdots&\vdots&\ddots&\vdots& \vdots&\vdots\\
          {\bf 0}                  &               {\bf 0}             &\dots & {\bf 0} & 0 & 0 &\dots & 1 &c_{s-1}
                        \end{array}\right),
\end{eqnarray*}}
and ${\bf C}(m_G) = \left(\begin{array}{ccccc}
                    0 &   0   &\dots &   0   & c_0 \\
                    1 &   0   &\dots &   0   & c_1 \\
                    0 &   1   &\dots &   0   & c_2 \\
                    \vdots &\vdots &\ddots&\vdots &\vdots\\
                    0 &   0   &\dots &   1   &c_{s-1} \\
                      \end{array}\right)$ is the Frobenius companion matrix of the main characteristic polynomial
$m_G(x) = x^s - c_0 - c_1 x - \dots - c_{s-1} x^{s-1}$, whose roots (that is, eigenvalues of ${\bf C}(m_{G})$) are the main eigenvalues of $G$.  Observe that the coefficients of the main characteristic polynomial $m_G(x)$ are easily determined using the walk matrix approach \cite{hagos1} (see also \cite[Cor. 2.3]{CardosoGomesPinheiro2022}).\\

While a square matrix $A$ has the same eigenvalues as $A^T$, its eigenvectors are, in general, different.
An eigenvector $\hat{\mathbf{u}}$ of $A^T$, satisfying $A^T\hat{\mathbf{u}} = \lambda \hat{\mathbf{u}}$
is usually referred to as the left eigenvector of $A$, since $\hat{\mathbf{u}}^TA = \lambda \hat{\mathbf{u}}^T$.

Returning to the matrix ${\bf C}(m_{G})$, since the roots of $m_{G}(x)$, $\mu_{1}, \dots, \mu_{s}$, are all distinct, ${\bf C}(m_{G})$ is diagonalizable, that is,
$$
{\bf U}{\bf C}(m_{G}) {\bf U}^{-1} = \text{diag}\left(\mu_{1}, \dots, \mu_{s}\right),
$$
where ${\bf U}$ is a matrix whose rows, $\hat{\bf u}^T_{\mu_{i}}$, with $1 \le i \le s$, are the left eigenvectors of ${\bf C}(m_{G})$.\\
Now, it is worth to recall that for every $\mu \in \{\mu_{1}, \dots, \mu_{s}\}$ the vector $\hat{\bf u}^T_{\mu} = (1, \mu, \mu^2, \dots, \mu^{s-1})$ is a left eigenvector of ${\bf C}(m_{G})$. Indeed, it follows that
$$
(1, \mu, \mu^2, \dots, \mu^{s-1})\underbrace{\left(\begin{array}{ccccc}
                                  0 &   0   &\dots &   0   & c_0 \\
                                  1 &   0   &\dots &   0   & c_1 \\
                                  0 &   1   &\dots &   0   & c_2 \\
                             \vdots &\vdots &\ddots&\vdots &\vdots\\
                                  0 &   0   &\dots &   1   &c_{s-1} \\
                                 \end{array}\right)}_{{\bf C}(m_{G})} = \mu(1, \mu, \dots, \mu^{s-2}, \mu^{s-1})
$$

taking into account that $\sum_{j=0}^{s-1}{c_j\mu^j}=\mu^s.$

Defining ${\bf M}=\left(\begin{array}{c}
               {\bf j}^T_n{\bf W}_G \\
                 0    \; \dots \; 0 \\
               \vdots \; \ddots\; \vdots\\
                 0    \; \dots \; 0 \\
               \end{array}\right)$ which is an $s \times s$ submatrix of every $s \times ps$ matrix $\widetilde{\bf W}_{i}$, it follows that
\begin{equation}\label{associated_matrix}
\widetilde{\bf W} = \left(\begin{array}{ccccc}
                       {\bf C}(m_G)   & \delta_{1,2}{\bf M} & \dots & \delta_{1,p-1}{\bf M} & \delta_{1,p}{\bf M} \\
                        \delta_{2,1}{\bf M} & {\bf C}(m_G)  & \dots & \delta_{2,p-1}{\bf M} & \delta_{2,p}{\bf M} \\
                        \vdots        &    \vdots     &\ddots & \vdots          & \vdots \\
                        \delta_{p,1}{\bf M} &  \delta_{p,2}{\bf M} & \dots & \delta_{p,p-1}{\bf M} & {\bf C}(m_G)
                  \end{array}\right).
\end{equation}

\begin{theorem}\label{h-join_spectra_theorem}
Let $H[G]$ be the lexicographic product as in Definition~\ref{lex_product} and consider that
$\mu_1, \dots, \mu_s, \mu_{s+1}, \dots, \mu_t$ with $t \le n$, are the distinct eigenvalues of $G$,
where $\mu_1, \dots, \mu_s$ are the main eigenvalues and each eigenvalue $\mu_j$ has multiplicity $m_j$,
for $1 \le j \le t$. Then
\begin{equation}\label{h-join_spectra}
\sigma(H[G]) = \{\mu_{1}^{[p(m_1-1)]}, \dots, \mu_{s}^{[p(m_s-1)]}\} \cup\\
               \{\mu_{s+1}^{[p m_{s+1}]}, \dots, \mu_{t}^{[p m_t]}\} \cup \sigma({\bf \widetilde{W}}),
\end{equation}
where the union of multisets is with possible repetitions.
\end{theorem}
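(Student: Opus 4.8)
The plan is to work directly with the adjacency matrix of $H[G]$. Writing $A=A(G)$ and using the Kronecker product $\otimes$ (with ${\bf e}_i$ the standard basis vectors of $\mathbb{R}^p$), the block structure of $H[G]$ — diagonal blocks $A$, and off-diagonal block $(i,j)$ equal to the all-one matrix ${\bf j}_n{\bf j}_n^T$ exactly when $ij\in E(H)$ — yields
\[
A(H[G]) = I_p \otimes A + A(H)\otimes \big({\bf j}_n{\bf j}_n^T\big).
\]
First I would split $\mathbb{R}^n$ as an orthogonal direct sum $\mathbb{R}^n=\mathcal{M}\oplus\mathcal{M}^{\perp}$, where $\mathcal{M}=\operatorname{col}({\bf W}_G)=\operatorname{span}\{{\bf j}_n, A{\bf j}_n,\dots,A^{s-1}{\bf j}_n\}$ is the walk (main) subspace, of dimension $s$. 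Since $\mathcal{M}$ is the smallest $A$-invariant subspace containing ${\bf j}_n$, both $\mathcal{M}$ and $\mathcal{M}^{\perp}$ are $A$-invariant, and every vector of $\mathcal{M}^{\perp}$ is orthogonal to ${\bf j}_n$. Because $\mathcal{M}=\operatorname{span}\{P_j{\bf j}_n : 1\le j\le s\}$, where $P_j$ is the orthogonal projector onto $\mathcal{E}_G(\mu_j)$, intersecting with each eigenspace shows that $\mathcal{M}^{\perp}$ carries $\mu_j$ with multiplicity $m_j-1$ when $\mu_j$ is main and $m_j$ when $\mu_j$ is non-main, so $\dim\mathcal{M}^{\perp}=n-s$.

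Next I would observe that $\mathbb{R}^p\otimes\mathcal{M}$ and $\mathbb{R}^p\otimes\mathcal{M}^{\perp}$ are complementary subspaces of $\mathbb{R}^{pn}$, each invariant under $A(H[G])$, so $\sigma(H[G])$ is the multiset union of the spectra of the two restrictions. On $\mathbb{R}^p\otimes\mathcal{M}^{\perp}$ the cross term vanishes: for ${\bf v}\in\mathcal{M}^{\perp}$ one has ${\bf j}_n{\bf j}_n^T{\bf v}={\bf 0}$, hence $A(H[G])({\bf e}_i\otimes{\bf v})={\bf e}_i\otimes A{\bf v}$, so the restriction equals $I_p\otimes\big(A|_{\mathcal{M}^{\perp}}\big)$. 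Its spectrum is $p$ copies of $\sigma\big(A|_{\mathcal{M}^{\perp}}\big)$, which is precisely the first two multisets $\{\mu_1^{[p(m_1-1)]},\dots,\mu_s^{[p(m_s-1)]}\}\cup\{\mu_{s+1}^{[pm_{s+1}]},\dots,\mu_t^{[pm_t]}\}$ of \eqref{h-join_spectra}.

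The heart of the argument, and the step I expect to be the main obstacle, is identifying the restriction of $A(H[G])$ to $\mathbb{R}^p\otimes\mathcal{M}$ with $\widetilde{\bf W}$. Using the ordered basis $\{{\bf e}_i\otimes A^{k}{\bf j}_n : 1\le i\le p,\ 0\le k\le s-1\}$ of $\mathbb{R}^p\otimes\mathcal{M}$, I would compute
\[
A(H[G])\big({\bf e}_i\otimes A^{k}{\bf j}_n\big) = {\bf e}_i\otimes A^{k+1}{\bf j}_n + \big({\bf j}_n^T A^{k}{\bf j}_n\big)\sum_{l}\delta_{i,l}(H)\,\big({\bf e}_l\otimes{\bf j}_n\big),
\]
reducing $A^{s}{\bf j}_n=\sum_{j=0}^{s-1}c_j A^{j}{\bf j}_n$ through the main polynomial $m_G$ when $k=s-1$. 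Under the coordinate identification $A^{k}{\bf j}_n\leftrightarrow$ (the $(k{+}1)$-th standard coordinate), the first term reproduces block-diagonally the companion matrix ${\bf C}(m_G)$; the scalars ${\bf j}_n^T A^{k}{\bf j}_n$ are exactly the entries of the row ${\bf j}_n^T{\bf W}_G$ that form the single nonzero row of ${\bf M}$, so together with the symmetry $\delta_{i,l}(H)=\delta_{l,i}(H)$ the second term reproduces the off-diagonal blocks $\delta_{l,i}{\bf M}$. Hence the matrix of this restriction is exactly $\widetilde{\bf W}$ of \eqref{associated_matrix}, contributing $\sigma(\widetilde{\bf W})$ as the third multiset. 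The bookkeeping linking the walk-vector basis to the standard basis underlying ${\bf C}(m_G)$ and ${\bf M}$, and verifying the $k=s-1$ reduction, is the only delicate part; the dimension count $p(n-s)+ps=pn$ confirms that every eigenvalue is accounted for, which establishes \eqref{h-join_spectra}.
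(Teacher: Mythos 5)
Your argument is correct, and it is genuinely different in character from what the paper does: the paper's entire proof is a one-line appeal to the general $H$-join spectral theorem of Cardoso--Gomes--Pinheiro (\cite[Th.~3.2]{CardosoGomesPinheiro2022}), whereas you give a self-contained derivation that exploits the extra structure available when all components of the $H$-join are the same graph $G$. Your key observation — that $A(H[G]) = I_p\otimes A(G) + A(H)\otimes {\bf j}_n{\bf j}_n^T$ leaves both $\mathbb{R}^p\otimes\mathcal{M}$ and $\mathbb{R}^p\otimes\mathcal{M}^{\perp}$ invariant, where $\mathcal{M}$ is the main (walk) subspace of $G$ — is only available because every diagonal block is the same $A(G)$ and every off-diagonal block is the same ${\bf j}_n{\bf j}_n^T$; in the general $H$-join one must instead patch together the main subspaces of the distinct $G_i$, which is what the cited theorem does. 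All the individual steps check out: the multiplicity count $m_j-1$ versus $m_j$ on $\mathcal{M}^{\perp}$ follows from $\mathcal{M}=\operatorname{span}\{P_j{\bf j}_n:\mu_j\ \text{main}\}$; the cross term annihilates $\mathcal{M}^{\perp}$ because ${\bf j}_n\in\mathcal{M}$; and in the walk basis $\{{\bf e}_i\otimes A^k{\bf j}_n\}$ the restriction to $\mathbb{R}^p\otimes\mathcal{M}$ has diagonal blocks ${\bf C}(m_G)$ (using $A^s{\bf j}_n=\sum_k c_kA^k{\bf j}_n$ and $\delta_{i,i}=0$) and off-diagonal blocks $\delta_{l,i}{\bf M}$, which is exactly \eqref{associated_matrix} (up to transposition conventions that do not affect the spectrum). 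What your route buys is transparency and independence from the external reference — it makes visible why $\sigma(\widetilde{\bf W})$ appears and why the remaining eigenvalues come in $p$ copies; what it costs is generality, since it does not recover the full $H$-join theorem for non-isomorphic components. Two very small points worth tightening if you write this up: state explicitly that $\{{\bf j}_n,A{\bf j}_n,\dots,A^{s-1}{\bf j}_n\}$ is linearly independent (rank of the walk matrix equals the number of main eigenvalues, per Hagos \cite{hagos1}), so that your basis of $\mathbb{R}^p\otimes\mathcal{M}$ really is a basis, and note that the restriction of a matrix to two complementary invariant subspaces yields the multiset union of the two spectra by block-triangularization, which justifies the final union.
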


\begin{proof}
Applying \cite[Th. 3.2]{CardosoGomesPinheiro2022} to the lexicographic product $H[G]$, the result follows.
\end{proof}

What follows, we prove that every main eigenvalue of $G$ is an eigenvalue of $\widetilde{\bf W}$ with multiplicity at least the nullity of $H$.

\begin{theorem}\label{left_eigenvecors_theorem}
Let $\mu$ be a main eigenvalue of $G$ and $\hat{\bf u}_{\mu}$ the corresponding main eigenvector (see \cite{CardosoGomesPinheiro2022}). Then $\mu$ is an eigenvalue of ${\bf \widetilde{W}}$ with an associated left eigenvector
$\hat{\bf v}^T_{\mu} = \left(\gamma_1 \hat{\bf u}^T_{\mu}, \dots, \gamma_p \hat{\bf u}^T_{\mu}\right)$ if and
only if $\hat{\bf \gamma}^T = (\gamma_1, \dots, \gamma_p)$ is an eigenvector of $A(H)$ associated with $0 \in \sigma(H)$.
Furthermore, the multiplicity of $\mu$ as an eigenvalue of the matrix $\widetilde{\bf W}$ is at least the nullity of $H$.
\end{theorem}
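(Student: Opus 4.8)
The plan is to exploit the block structure of $\widetilde{\bf W}$ displayed in \eqref{associated_matrix} together with the fact already recorded above that $\hat{\bf u}^T_\mu = (1,\mu,\dots,\mu^{s-1})$ is a left eigenvector of the companion matrix, i.e. $\hat{\bf u}^T_\mu\,{\bf C}(m_G)=\mu\,\hat{\bf u}^T_\mu$. First I would take the candidate left eigenvector $\hat{\bf v}^T_\mu = (\gamma_1\hat{\bf u}^T_\mu,\dots,\gamma_p\hat{\bf u}^T_\mu)$ and compute $\hat{\bf v}^T_\mu\widetilde{\bf W}$ one block of $s$ columns at a time, using that in \eqref{associated_matrix} the diagonal blocks equal ${\bf C}(m_G)$ while the off-diagonal $(i,j)$ block equals $\delta_{i,j}{\bf M}$.

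Reading off the $j$-th block of $\hat{\bf v}^T_\mu\widetilde{\bf W}$, it equals $\gamma_j\,\hat{\bf u}^T_\mu{\bf C}(m_G)+\big(\sum_{i\neq j}\gamma_i\delta_{i,j}\big)\hat{\bf u}^T_\mu{\bf M}$. The two elementary computations I need are $\hat{\bf u}^T_\mu{\bf C}(m_G)=\mu\hat{\bf u}^T_\mu$ and, since ${\bf M}$ has first row ${\bf j}^T_n{\bf W}_G$ and all remaining rows zero, $\hat{\bf u}^T_\mu{\bf M}={\bf j}^T_n{\bf W}_G$ (the entries $\mu,\dots,\mu^{s-1}$ of $\hat{\bf u}^T_\mu$ multiply the zero rows and drop out). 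Hence the eigenvalue equation $\hat{\bf v}^T_\mu\widetilde{\bf W}=\mu\hat{\bf v}^T_\mu$ collapses, block by block, to $\big(\sum_{i}\delta_{i,j}\gamma_i\big){\bf j}^T_n{\bf W}_G={\bf 0}$ for every $j$, where I have used $\delta_{j,j}=0$. Recognising $\sum_i\delta_{i,j}\gamma_i=(A(H)\hat{\bf \gamma})_j$ by symmetry of $A(H)$, and noting that ${\bf j}^T_n{\bf W}_G\neq{\bf 0}$ because its first entry is $n>0$, this system is equivalent to $A(H)\hat{\bf \gamma}={\bf 0}$. This yields both implications simultaneously: the nonzero vector $\hat{\bf v}^T_\mu$ is a left eigenvector of $\widetilde{\bf W}$ for $\mu$ precisely when $\hat{\bf \gamma}\neq{\bf 0}$ lies in $\ker A(H)=\mathcal{E}_H(0)$, that is, is an eigenvector of $A(H)$ associated with $0\in\sigma(H)$.

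For the multiplicity claim I would observe that the assignment $\hat{\bf \gamma}\mapsto\hat{\bf v}_\mu$ is linear and injective, the latter because $\hat{\bf u}_\mu\neq{\bf 0}$ so that $\hat{\bf v}_\mu={\bf 0}$ forces $\hat{\bf \gamma}={\bf 0}$. Taking a basis $\hat{\bf \gamma}^{(1)},\dots,\hat{\bf \gamma}^{(\eta)}$ of the $\eta$-dimensional space $\mathcal{E}_H(0)$, its images are $\eta$ linearly independent left eigenvectors of $\widetilde{\bf W}$ for $\mu$. Consequently the geometric multiplicity of $\mu$ as an eigenvalue of $\widetilde{\bf W}$ (equivalently, of $\widetilde{\bf W}^T$) is at least $\eta$, and since geometric multiplicity never exceeds algebraic multiplicity, the multiplicity of $\mu$ in $\sigma(\widetilde{\bf W})$ is at least the nullity $\eta$ of $H$.

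I do not expect a genuine obstacle here, since the argument reduces to a direct block computation. The only points needing care are the identity $\hat{\bf u}^T_\mu{\bf M}={\bf j}^T_n{\bf W}_G$, which hinges on the one-nonzero-row shape of ${\bf M}$, and the nonvanishing of ${\bf j}^T_n{\bf W}_G$, which is exactly what forces the scalar combination of the $\gamma_i$ (rather than the walk vector) to vanish. The one bookkeeping subtlety is the final passage from left eigenvectors to multiplicity, which I would phrase through the geometric multiplicity of $\widetilde{\bf W}$ and the inequality geometric $\le$ algebraic, rather than asserting an eigenbasis directly.
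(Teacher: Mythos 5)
Your proposal is correct and follows essentially the same route as the paper: the block-by-block computation of $\hat{\bf v}^T_{\mu}\widetilde{\bf W}$ using $\hat{\bf u}^T_{\mu}{\bf C}(m_G)=\mu\hat{\bf u}^T_{\mu}$ and $\hat{\bf u}^T_{\mu}{\bf M}={\bf j}^T_n{\bf W}_G$, the reduction to $A(H)\hat{\bf \gamma}=\hat{\bf 0}$ via the nonvanishing of ${\bf j}^T_n{\bf W}_G$, and the lifting of a basis of $\mathcal{E}_H(0)$ to $\eta$ linearly independent left eigenvectors are exactly the paper's argument. Your phrasing of the last step through injectivity of the linear map $\hat{\bf \gamma}\mapsto\hat{\bf v}_{\mu}$ and the inequality between geometric and algebraic multiplicity is a slightly tidier rendering of the paper's explicit linear-independence check, but it is the same idea.
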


\begin{proof}
Let $\mu$ be a root of $m_G(x)$ and thus an eigenvalue of ${\bf C}(m_G)$. Then the row vector
$\hat{\bf u}^T_{\mu} = (1, \mu, \mu^2, \dots, \mu^{s-1})$  is a left eigenvector of ${\bf C}(m_G)$, that is,
$\hat{\bf u}^T_{\mu} {\bf C}(m_G)  = \mu \hat{\bf u}^T_{\mu}$. Defining
\begin{equation}
\hat{\bf v}^T_{\mu} = \left(\gamma_1 \hat{\bf u}^T_{\mu}, \dots, \gamma_p \hat{\bf u}^T_{\mu}\right),\label{left_eigenvector_1}
\end{equation}
where $\gamma_1, \dots, \gamma_p$ are scalars, it follows that
\begin{eqnarray}
\hat{\bf v}^T_{\mu}\widetilde{\bf W}
&=& \left(\gamma_1\hat{\bf u}^T_{\mu}{\bf C}(m_G)+\sum_{i \in N_H(1)}{\gamma_i\hat{\bf u}^T_{\mu}{\bf M}}, \dots, \gamma_p\hat{\bf u}^T_{\mu}{\bf C}(m_G)+\sum_{i \in N_H(p)}{\gamma_i\hat{\bf u}^T_{\mu}{\bf M}}\right) \nonumber\\
                            &=& \mu\left(\gamma_1{\bf u}^T_{\mu}, \dots,\gamma_p{\bf u}^T_{\mu}\right) +
                                   \left(\sum_{i \in N_H(1)}{\gamma_i}{\bf j}^T_n{\bf W}_G , \dots ,
                                               \sum_{i \in N_H(p)}{\gamma_i}{\bf j}^T_n{\bf W}_G\right) \nonumber\\
                            &=& \mu \hat{\bf v}^T_{\mu} + {\bf j}^T_n{\bf W}_G\underbrace{\left(\sum_{i \in N_H(1)}{\gamma_i}, \dots,\sum_{i \in N_H(p)}{\gamma_i}\right)}_{(**)}. \label{left_eigenvalue_eq_1}
\end{eqnarray}
Therefore, since ${\bf j}^T_n{\bf W}_G$ is a row vector with positive components, it follows that
$\mu$ is an eigenvalue of $\widetilde{\bf W}$ with an associated left eigenvector $\hat{\bf v}_{\mu}$, defined in \eqref{left_eigenvector_1}, if and only if (**) is equal to zero, that is, if and only if
$\hat{\bf \gamma}^T = (\gamma_1, \gamma_2, \dots, \gamma_p)$ is a nontrivial solution of the linear system
\begin{equation}\label{null_eigebvector}
\left\{\begin{array}{ccc}
       \sum_{i \in N_H(1)}{x_i} &  =   & 0 \\
       \sum_{i \in N_H(2)}{x_i} &  =   & 0 \\
       \vdots                        &\vdots& \vdots \\
       \sum_{i \in N_H(p)}{x_i} &  =   & 0
\end{array}\right. \Leftrightarrow A(H) \hat{\bf x} = \hat{\bf 0}_p,
\end{equation}
where $\hat{\bf x}^T = (x_1, x_2, \dots, x_p)$ is the vector of unknowns.
Equivalently, we may say that $\mu$ is an eigenvalue of $\widetilde{\bf W}$ with an associated left eigenvector $\hat{\bf v}_{\mu}$ if and only if $\hat{\bf \gamma}$ is an eigenvector of $A(H)$ associated with $0 \in \sigma(H)$.\\

Let us assume that $H$ has nullity $\eta \ge 1$, then there are $\eta$ linearly independent solutions of the system \eqref{null_eigebvector} which are linearly independent eigenvectors $\hat{\gamma}_1, \dots, \hat{\gamma}_{\eta}$ forming a basis for $\mathcal{E}_H(0)$ and producing $\eta$ distinct eigenvectors $\hat{\bf v}_{\mu,1}, \dots, \hat{\bf v}_{\mu,\eta}$ of $\widetilde{\bf W}$ associated  with $\mu$. Furthermore, considering
$\hat{\bf \gamma}_j^T = (\gamma_{1,j}, \dots, \gamma_{p,j})^T$, for $1 \le j \le \eta$, the vectors
$\hat{\bf v}_{\mu,1}^T=\left(\gamma_{1,1} \hat{\bf u}^T_{\mu}, \dots, \gamma_{p,1} \hat{\bf u}^T_{\mu}\right)$,
$\dots$,  $\hat{\bf v}_{\mu,\eta}^T=\left(\gamma_{1,\eta} \hat{\bf u}^T_{\mu}, \dots, \gamma_{p,\eta} \hat{\bf u}^T_{\mu}\right)$ are linearly independent. Indeed,  if there are scalars $\beta_1, \dots, \beta_{\eta}$ not all zero such that $\sum_{j=1}^{\eta}{\beta_j\hat{\bf v}_{\mu,j}} = \hat{\bf 0}$, then
\begin{eqnarray*}
\sum_{j=1}^{k}{\beta_j\hat{\bf v}_{\mu,j}} &=& \sum_{j=1}^{\eta}{\beta_j\left(\begin{array}{c}
                                                                     \gamma_{1,j}\hat{\bf u}_{\mu} \\
                                                                     \vdots \\
                                                                     \gamma_{p,j}\hat{\bf u}_{\mu}
                                                                     \end{array}\right)} \; = \;
                                        \left(\begin{array}{c}
                                        \left(\sum_{j=1}^{\eta}{\beta_j\gamma}_{1,j}\right)\hat{\bf u}_{\mu}\\
                                        \vdots \\
                                        \left(\sum_{j=1}^{\eta}{\beta_j\gamma}_{p,j}\right)\hat{\bf u}_{\mu}
                                        \end{array}\right) \: = \: \left(\begin{array}{c}
                                                                         \hat{\bf 0}_s\\
                                                                         \vdots \\
                                                                         \hat{\bf 0}_s
                                                                         \end{array}\right)
\end{eqnarray*}
which is equivalent to write
\begin{eqnarray}
\left(\begin{array}{c}
      \sum_{j=1}^{\eta}{\beta_j\gamma}_{1,j}\\
      \vdots \\
      \sum_{j=1}^{\eta}{\beta_j\gamma}_{p,j} \\
      \end{array}\right) = \left(\begin{array}{c}
                                  0 \\
                                 \vdots \\
                                  0
                                 \end{array}\right) & \Leftrightarrow &
                      \sum_{j=1}^{\eta}{\beta_j\left(\begin{array}{c}
                                                  \gamma_{1,j} \\
                                                  \vdots \\
                                                  \gamma_{p,j}
                                                  \end{array}\right)} = \left(\begin{array}{c}
                                                                               0 \\
                                                                               \vdots \\
                                                                                0
                                                                               \end{array}\right) \nonumber \\
                                                   & \Leftrightarrow &
                      \sum_{j=1}^{\eta}{\beta_j\hat{\gamma}_j} = {\bf 0}_p.\label{lin_indp}
\end{eqnarray}
From \eqref{lin_indp} it follows that $\beta_1 = \dots = \beta_{\eta} = 0$, since $\hat{\gamma}_1, \dots, \hat{\gamma}_{\eta}$
are linearly independent.
\end{proof}

\begin{corollary}\label{corollary_1}
Let us assume that $H$ has nullity $\eta$ and let $\mu$ be a main eigenvalue of $G$. Then $\mu$ is an eigenvalue of the $H[G]$ associated matrix $\widetilde{\bf W}$ with multiplicity at least $\eta$, which is non-main if and only if $0$ is a non-main eigenvalue of $H$.
\end{corollary}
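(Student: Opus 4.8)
The plan is to read the eigenvectors of $\widetilde{\bf W}$ associated with $\mu$ straight off Theorem~\ref{left_eigenvecors_theorem} and test them against the appropriate all-ones vector. First I would fix the notion of main/non-main for $\widetilde{\bf W}$: since the first column of ${\bf W}_G$ is ${\bf j}_n$, the walk-matrix reduction carries ${\bf j}_{pn}$ to the vector ${\bf e} = {\bf j}_p\otimes{\bf e}_1$ that equals ${\bf e}_1 = (1,0,\dots,0)^T$ in each of the $p$ blocks of size $s$, so $\mu$ is main for $\widetilde{\bf W}$ precisely when its eigenspace is not orthogonal to ${\bf e}$ (this agrees with the literal ${\bf j}_{ps}$-based notion whenever $1+\mu+\cdots+\mu^{s-1}\neq 0$). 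By Theorem~\ref{left_eigenvecors_theorem} the left eigenvectors of $\widetilde{\bf W}$ furnished by the nullspace of $H$ are $\hat{\bf v}^T_{\mu} = (\gamma_1\hat{\bf u}^T_{\mu},\dots,\gamma_p\hat{\bf u}^T_{\mu})$ with $\hat{\bf\gamma}\in\mathcal E_H(0)$, and since $\hat{\bf u}^T_{\mu}{\bf e}_1 = 1$ this yields the clean identity $\hat{\bf v}^T_{\mu}{\bf e} = \sum_{k=1}^{p}\gamma_k = {\bf j}^T_p\hat{\bf\gamma}$.

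From this identity one implication is immediate. If $0$ is a non-main eigenvalue of $H$, then ${\bf j}^T_p\hat{\bf\gamma}=0$ for every $\hat{\bf\gamma}\in\mathcal E_H(0)$, so every such $\hat{\bf v}_{\mu}$ is orthogonal to ${\bf e}$; conversely, if $0$ is main for $H$, choosing $\hat{\bf\gamma}\in\mathcal E_H(0)$ with ${\bf j}^T_p\hat{\bf\gamma}\neq 0$ produces a left eigenvector with $\hat{\bf v}^T_{\mu}{\bf e}\neq 0$, so $\mu$ is main for $\widetilde{\bf W}$. The multiplicity bound ``at least $\eta$'' is already contained in Theorem~\ref{left_eigenvecors_theorem}.

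The genuine difficulty is that non-main-ness is a property of the \emph{whole} eigenspace, so for the forward implication I must be certain that the eigenvectors of the special form above exhaust the left eigenspace of $\mu$; otherwise a stray eigenvector could fail to be orthogonal to ${\bf e}$ even when $0$ is non-main for $H$. To control this I would use the Kronecker description $\widetilde{\bf W} = I_p\otimes{\bf C}(m_G) + A(H)\otimes{\bf M}$, in which ${\bf M} = {\bf e}_1({\bf j}^T_n{\bf W}_G)$ has rank one. Diagonalizing $A(H)$ decomposes the space into blocks on each of which $\widetilde{\bf W}$ acts as ${\bf C}(m_G)+\theta{\bf M}$ for $\theta\in\sigma(H)$, so $\mu$ can pick up extra eigenvectors only through a nonzero $\theta$ with $\mu\in\sigma({\bf C}(m_G)+\theta{\bf M})$.

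The crux, which I expect to be the main obstacle, is to rule this out for a main $\mu$. Applying the matrix determinant lemma to the rank-one update and using that $\mu$ is a simple root of $m_G$, so that $\mathrm{adj}(\mu I-{\bf C}(m_G)) = \kappa\,{\bf x}_\mu\hat{\bf u}^T_{\mu}$ with $\kappa\neq 0$ and a right eigenvector ${\bf x}_\mu$, one gets $\det(\mu I-{\bf C}(m_G)-\theta{\bf M}) = -\theta\kappa({\bf j}^T_n{\bf W}_G\,{\bf x}_\mu)(\hat{\bf u}^T_{\mu}{\bf e}_1)$. The last factor is $1$, and because ${\bf j}^T_n{\bf W}_G\,{\bf C}(m_G)^k{\bf e}_1 = {\bf j}^T_nA^k{\bf j}_n$ for all $k$, the scalar ${\bf j}^T_n{\bf W}_G\,{\bf x}_\mu$ is, up to the nonzero factor $\hat{\bf u}^T_{\mu}{\bf x}_\mu$, the residue at $\mu$ of the walk generating function $\sum_k({\bf j}^T_nA^k{\bf j}_n)x^{-k-1} = \sum_i\alpha_i/(x-\mu_i)$, namely the squared norm $\alpha_\mu$ of the projection of ${\bf j}_n$ onto $\mathcal E_G(\mu)$. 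This is strictly positive exactly because $\mu$ is main, so for $\theta\neq 0$ the determinant is nonzero and $\mu\notin\sigma({\bf C}(m_G)+\theta{\bf M})$. Hence the $\mu$-eigenspace of $\widetilde{\bf W}$ is precisely the $\theta=0$ block of Theorem~\ref{left_eigenvecors_theorem} (its multiplicity is in fact exactly $\eta$), the orthogonality computation covers the entire eigenspace, and the stated equivalence follows.
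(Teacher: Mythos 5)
Your proof is correct, and it takes a genuinely different and more complete route than the paper's. The paper's own argument is the short version of your first two paragraphs: it writes the left eigenvectors $\hat{\bf v}^T_{\mu}=(\gamma_1\hat{\bf u}^T_{\mu},\dots,\gamma_p\hat{\bf u}^T_{\mu})$ supplied by Theorem~\ref{left_eigenvecors_theorem}, computes $\hat{\bf v}^T_{\mu}\mathbf{j}=\big(\hat{\bf u}^T_{\mu}\mathbf{j}\big)\sum_i\gamma_i$, and concludes from $\hat{\bf u}^T_{\mu}\mathbf{j}\neq 0$ that orthogonality holds exactly when $\hat{\bf \gamma}\perp\mathbf{j}_p$, i.e.\ when $0$ is non-main for $H$. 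It does not address the issue you correctly single out as the crux: non-main-ness is a statement about the entire eigenspace of $\mu$ in $\widetilde{\bf W}$, so one must know that the vectors coming from $\mathcal{E}_H(0)$ exhaust it. Your Kronecker decomposition $\widetilde{\bf W}=I_p\otimes{\bf C}(m_G)+A(H)\otimes{\bf M}$ with ${\bf M}$ of rank one, together with the matrix determinant lemma and the positivity of the residue $\alpha_\mu$ of the walk generating function at a main eigenvalue, shows that $\mu\notin\sigma({\bf C}(m_G)+\theta{\bf M})$ for $\theta\neq 0$; this closes that gap, and as a bonus pins the multiplicity of $\mu$ in $\widetilde{\bf W}$ to exactly $\eta$, which is strictly more than the paper claims. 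Your preliminary discussion of which test vector ($\mathbf{j}_{ps}$ versus ${\bf e}={\bf j}_p\otimes{\bf e}_1$) defines main-ness for $\widetilde{\bf W}$ is also a fair point: the paper pairs $\hat{\bf u}_{\mu}$ with $\mathbf{j}_n$ and asserts $\hat{\bf u}^T_{\mu}\mathbf{j}_n\neq 0$, which only typechecks if $\hat{\bf u}_{\mu}$ is read as the main eigenvector in $\mathbb{R}^n$ rather than the vector $(1,\mu,\dots,\mu^{s-1})$ used in the proof of Theorem~\ref{left_eigenvecors_theorem}; your convention resolves this ambiguity cleanly, at the cost of departing from the literal all-ones definition when $1+\mu+\cdots+\mu^{s-1}=0$. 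In short: the paper buys brevity by testing only the exhibited eigenvectors; you buy rigor by proving there are no others.
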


\begin{proof}
The first part is an immediate consequence of Theorem~\ref{left_eigenvecors_theorem}. Furthermore, taking into account that the $\eta$ linearly independent left eigenvectors of $\widetilde{\bf W}$ associated with $\mu$ are the vectors ${\bf \hat{v}}^{T}_{\mu}$ represented in \eqref{left_eigenvector_1}, where the scalars $\gamma_1, \dots, \gamma_p$ are the components of the linearly independent eigenvectors $\hat{\bf \gamma}^T = (\gamma_1, \gamma_2, \dots, \gamma_p)$ of the adjacency matrix of $H$ associated with the eigenvalue $0$, it follows that
\begin{eqnarray}
{\bf \hat{v}}^T_{\mu}\mathbf{j} &=& \sum_{i=1}^{p}{\gamma_i \hat{\bf u}^T_{\mu}\mathbf{j}_n} \nonumber\\
                          &=& \hat{\bf u}^T_{\mu}\mathbf{j}_n\left(\sum_{i=1}^{p}{\gamma_i}\right), \label{marca_1}
\end{eqnarray}
where $\mathbf{j}$ and $\mathbf{j}_n$ are the all-1 vectors of $\mathbb{R}^{pn}$ and $\mathbb{R}^n$, respectively.
Therefore, since $\hat{\bf u}^T_{\mu}\mathbf{j}_n \ne 0$, the expression in \eqref{marca_1} is equal to zero if and only if $\hat{\bf \gamma}^T = (\gamma_1, \dots, \gamma_p)$ is orthogonal to the all-1 vector, that is, $0$ is a non-main eigenvalue of $H$.
\end{proof}

\begin{example}\label{ex_2}
Assuming that $H \cong K_{2,2}$ and $G \cong K_{1,2}$, it follows that
$A(G)=\left(\begin{array}{ccc}
                   0 & 1 & 0\\
                   1 & 0 & 1\\
                   0 & 1 & 0
            \end{array}\right)$ and then we may determine the main polynomial $m_G(\lambda)$ using the walk matrix approach which takes into account that
            \begin{eqnarray*}
            m_G(A(G)) = \hat{0} & \Leftrightarrow & \sum_{j=0}^{s-1}{c_jA(G)^j{\bf j}} = A(G)^s{\bf j}\\
                                & \Leftrightarrow & \underbrace{(A(G)^0{\bf j}_3, A(G){\bf j}_3, \dots, A(G)^{s-1}{\bf j}_3)}_{W_G}\hat{c} = A(G)^s{\bf j}_3,
            \end{eqnarray*}
            where $\hat{c}^T=(c_0, c_1, \dots, c_{s-1})$. So, in the particular case of this example,
\begin{eqnarray*}
\underbrace{({\bf j}_3, A(G){\bf j}_3)}_{{\bf W_G}} \hat{c} = A(G)^2{\bf j}_3 \Leftrightarrow
                 \left(\begin{array}{cc}
                  1  &  1\\
                  1  &  2\\
                  1  &  1
                 \end{array}\right)\left(\begin{array}{c}
                                          c_0\\
                                          c_1
                                         \end{array}\right) &=& \left(\begin{array}{c}
                                                                       2\\
                                                                       2\\
                                                                       2\\
                                                                      \end{array}\right).
\end{eqnarray*}
Therefore, $m_G(\mu) = \mu^2 - c_1\mu - c_0 = \mu^2 - 2$ and then $\sigma(G) = \{\underbrace{\sqrt{2}, -\sqrt{2}}_{main}, \!\!\underbrace{0}_{non-main}\!\!\}$. Since the matrices ${\bf C}(m_G)$ and ${\bf M}$ are
$$
{\bf C}(m_G) = \left(\begin{array}{cc}
                            0 & c_0\\
                            1 & c_1
                     \end{array}\right) =  \left(\begin{array}{cc}
                                                  0 & 2\\
                                                  1 & 0
                                           \end{array}\right)  \text{ and }
                                                               {\bf M} = \left(\begin{array}{c}
                                                                              {\bf j}_3 {\bf W}_G\\
                                                                              0 \; \; \; 0
                                                                              \end{array}\right) = \left(\begin{array}{cc}
                                                                                                          3  &  4 \\
                                                                                                          0  &  0
                                                                                                         \end{array}\right),
$$
taking into account the adjacency matrix $A(H)$, we obtain $\delta_{1,2}(H) = \delta_{2,1}(H) = \delta_{3,4}(H) = \delta_{4,3}(H) = 0$ and $\delta_{i,j}(H)=1$,  $$ \text{for every } (i,j) \in \{(1,3),(3,1), (1,4),(4,1), (2,3),(3,2),(2,4),(4,2)\}.$$ Then the $H[G]$ associated matrix is
\begin{eqnarray*}
\widetilde{\bf W} &=& \left(\begin{array}{cccc}
                       {\bf C}(m_G)               & \hat{{\bf 0}}_{2 \times 2} & {\bf M}      & {\bf M}\\
                       \hat{{\bf 0}}_{2 \times 2} & {\bf C}(m_G)               & {\bf M}      & {\bf M}\\
                       {\bf M}                    & {\bf M}                    & {\bf C}(m_G) & \hat{{\bf 0}}_{2 \times 2}\\
                       {\bf M}                    & {\bf M}                    & \hat{{\bf 0}}_{2 \times 2} & {\bf C}(m_G)
                      \end{array}\right)\\
                  &=&  \left(\begin{array}{cccccccc}
                           0 & 2 & 0 & 0 & 3 & 4 & 3 & 4\\
                           1 & 0 & 0 & 0 & 0 & 0 & 0 & 0\\
                           0 & 0 & 0 & 2 & 3 & 4 & 3 & 4\\
                           0 & 0 & 1 & 0 & 0 & 0 & 0 & 0\\
                           3 & 4 & 3 & 4 & 0 & 2 & 0 & 0\\
                           0 & 0 & 0 & 0 & 1 & 0 & 0 & 0\\
                           3 & 4 & 3 & 4 & 0 & 0 & 0 & 2\\
                           0 & 0 & 0 & 0 & 0 & 0 & 1 & 0
                          \end{array}\right).
\end{eqnarray*}
Taking into account that $\sigma(H) = \{\underbrace{2}_{main}, \underbrace{-2,0^{[2]}}_{non-main}\}$, it follows that $H$ has nullity $\eta = 2$ and the eigenvalue $0$ is non-main. Applying Corollary~\ref{corollary_1}, we may conclude that the main eigenvalues of $G$, $\sqrt{2}$ and $-\sqrt{2}$, are non-main eigenvalues of $\widetilde{\bf W}$ each one with multiplicity $2$. So, from Theorem~\ref{h-join_spectra_theorem}, it follows
\begin{eqnarray*}
\sigma(H[G]) &=& \{\sqrt{2}^{[4 \times 0]}, (-\sqrt{2})^{[4 \times 0]}\} \cup \{0^{[4 \times 1]}\} \cup \sigma(\widetilde{\bf W})\\
             &=& \{0^{[4]}\} \cup \{\sqrt{2}^{[2]}, (-\sqrt{2})^{[2]}\} \cup \Lambda.
\end{eqnarray*}
Since the characteristic polynomial of the matrix $\widetilde{\bf W}$ is $\phi_{\widetilde{\bf W}}(x) = (-2 + x^2)^2 (-10 - 6 x + x^2) (6 + 6 x + x^2)$ it follows that $\Lambda = \{3+\sqrt{19}, 3-\sqrt{19}, 3 + \sqrt{3}, 3 - \sqrt{3}\}$.
\end{example}

\section{Lexicographic powers of a graph}

Consider a graph $G$ of order $n$, with distinct eigenvalues $\mu_{1(1)}, \dots, \mu_{s(1)},$ $\mu_{s(1)+1}, \dots, \mu_{t(1)}$ with $t(1) \le n$, where $\mu_{1(1)}, \dots, \mu_{s(1)}$ are the main eigenvalues and each $\mu_{j(1)}$ has multiplicity $m_{j(1)}$, for $1 \le j(1) \le t(1)$. Let us denote the distinct eigenvalues of the graph $G^k$ by $\mu_{1(k)}, \dots, \mu_{s(k)}, \mu_{s(k)+1}, \dots, \mu_{t(k)}$ with $t(k) \le n^k$, where $\mu_{1(k)}, \dots, \mu_{s(k)}$ are the main eigenvalues and each $\mu_{j(k)}$ has multiplicity $m_{j(k)}$.

Applying Theorem~\ref{h-join_spectra_theorem}, it follows that the spectrum of $G^{k+1}=G^k[G]=G[G^k]$, is
\begin{equation*}
\sigma(G[G^k])\!\! =\!\! \{\mu_{1(k)}^{[n(m_{1(k)}-1)]}, \dots, \mu_{s(k)}^{[n(m_{s(k)}-1)]}\} \cup \{\mu_{s(k)+1}^{[nm_{s(k)+1}]}, \dots, \mu_{t(k)}^{[nm_{t(k)}]}\} \cup \sigma(\widetilde{\bf W}(G^k)),
\end{equation*}
where
 $$
\widetilde{\bf W}(G^k)=\left(\begin{array}{ccccc}
                       {\bf C}(m_{G^k}) & \delta_{1,2}{\bf M}(G^k)& \dots & \delta_{1,n-1}{\bf M}(G^k) & \delta_{1,n}{\bf M}(G^k)\\
                       \delta_{2,1}{\bf M}(G^k)& {\bf C}(m_{G^k}) & \dots & \delta_{2,n-1}{\bf M}(G^k) & \delta_{2,n}{\bf M}(G^k)\\
                       \vdots                   &    \vdots       &\ddots & \vdots                     & \vdots \\
                       \delta_{n,1}{\bf M}(G^k) & \delta_{n,2}{\bf M}(G^k)& \dots & \delta_{n,n-1}{\bf M}(G^k) & {\bf C}(m_{G^k})
                       \end{array}\right),
$$
is an $ns(k) \times ns(k)$ matrix, ${\bf C}(m_{G^k}) = \left(\begin{array}{ccccc}
                                                   0 &   0   &\dots &   0   & c_{0(k)} \\
                                                   1 &   0   &\dots &   0   & c_{1(k)} \\
                                                   0 &   1   &\dots &   0   & c_{2(k)} \\
                                                   \vdots &\vdots &\ddots&\vdots &\vdots\\
                                                   0 &   0   &\dots &   1   &c_{s(k)-1} \\
                                                   \end{array}\right)$ is the Frobenius companion matrix of the main characteristic polynomial $m_{G^k}(x) = x^{s(k)} - c_{0(k)} - c_{1(k)} x - \dots - c_{s(k)-1} x^{s(k)-1}$,
${\bf M}(G^k)=\left(\begin{array}{c}
               {\bf j}^T_{n^k}{\bf W}_{G^k} \\
               0    \; \dots \; 0 \\
               \vdots \; \ddots\; \vdots\\
               0    \; \dots \; 0 \\
               \end{array}\right)$ and ${\bf W}_{G^k}$ is the walk matrix of $G^k$, that is, ${\bf W}_{G^k} = ({\bf j}_{n^k}, A(G^k){\bf j}_{n^k}, \dots, A(G^k)^{s_k-1}{\bf j}_{n^k})$. Observe that the first row of the matrix ${\bf M}(G^k)$ is ${\bf j}^T_{n^k}{\bf W}_{G^k}$ and the remaining entries are equal to zero.

Applying Corollary~\ref{corollary_1}, we may conclude that if the nullity of $G$ is $\eta >0$, then every main eigenvalue $\mu$ of $G$ is an eigenvalue of $\widetilde{\bf W}(G^k)$  with multiplicity at least $\eta$. Furthermore, these eigenvalues are non-main for $\widetilde{{\bf W}}$ if and only if $0$ is non-main as eigenvalue of $G$.

\begin{example}
Consider the graph $G \cong K_{1,2}$ and then $A(G) = \left(\begin{array}{ccccc}
                                                                   0 &   1   &   1 \\
                                                                   1 &   0   &   0 \\
                                                                   1 &   0   &   0
                                                            \end{array}\right).$
Using a walk matrix approach to determine the coefficients of the main polynomial $m_G(x)$ (see \cite{hagos1}), from
${\bf W}_G=({\bf j}, A(G){\bf j})=\left(\begin{array}{cc}
                                   1 & 2 \\
                                   1 & 1 \\
                                   1 & 1 \\
                            \end{array}\right)$ and taking into account that                             
                            $$m_G(x)=x^2-c_{0(1)}-c_{1(1)}x, m_G(A(G)){\bf j}_3 = \hat{\bf 0}_3 \Leftrightarrow A^2(G){\bf j}_{3} = c_{0(1)}{\bf j}+c_{1(1)}A(G){\bf j}_3,$$ it follows 
$$
A^2(G){\bf j}_3 = {\bf W}_G\left(\begin{array}{c}
                                c_0\\
                                c_1
                         \end{array}\right) \Leftrightarrow \left(\begin{array}{c}
                                                                         2 \\
                                                                         2 \\
                                                                         2
                                                                   \end{array}\right) = \left(\begin{array}{cc}
                                                                                                     1 & 2 \\
                                                                                                     1 & 1 \\
                                                                                                     1 & 1 \\
                                                                                              \end{array}\right)\left(\begin{array}{c}
                                                                                                                             c_{0(1)}\\
                                                                                                                             c_{1(1)}
                                                                                                                      \end{array}\right).
$$
Therefore, $c_{0(1)}=2, \; c_{1(1)}=0$ and thus $m_G(x) = x^2 - 2$. Furthermore, $\sigma(G)=\{\underbrace{\sqrt{2},-\sqrt{2}}_{main}, \underbrace{0}_{non-main}\}$.
\begin{enumerate}
\item \textbf{Let us determine} $\sigma(G^2)$.
      Taking into account that ${\bf M}(G) = \left(\begin{array}{cc}
                                                 3 & 4 \\
                                                 0 & 0
                                          \end{array}\right)$ and $C(m_G) = \left(\begin{array}{cc}
                                                                                         0 & 2 \\
                                                                                         1 & 0
                                                                                  \end{array}\right),$ it follows that
      $\widetilde{\bf W}(G) = \left(\begin{array}{cccccc}
                                        0 & 2 & 3 & 4 & 3 & 4 \\
                                        1 & 0 & 0 & 0 & 0 & 0 \\
                                        3 & 4 & 0 & 2 & 0 & 0 \\
                                        0 & 0 & 1 & 0 & 0 & 0 \\
                                        3 & 4 & 0 & 0 & 0 & 2 \\
                                        0 & 0 & 0 & 0 & 1 & 0
                                 \end{array}\right)$.\\
Taking into account that $-\sqrt{2}$ and $\sqrt{2}$ are main eigenvalues of $G$ and $0$ is a non-main eigenvalue with multiplicity $1$, by Corollary~\ref{corollary_1}, $-\sqrt{2}$ and $\sqrt{2}$ are non-main eigenvalues of $\widetilde{\bf W}$ with multiplicity $1$. Then
\begin{eqnarray*}
\phi(\widetilde{\bf W}(G)) &=& 56 + 96x + 16x^2 - 48x^3 -24x^4 + x^6\\
                           &=& \underbrace{(-2+x^2)}_{non-main}\underbrace{(-28 -48x - 22x^2 + x^4)}_{m_{G^2}(x)}.\\
\end{eqnarray*}
Denoting the characteristic polynomial of a graph $G$ (or a matrix $A$) by $\phi(G)$ ($\phi(A)$, respectively) and the multiset of roots of a polynomial $\phi$ by $\text{Roots}(\phi)$, applying Theorem~\ref{h-join_spectra_theorem}, we obtain
\begin{eqnarray*}
\sigma(G^2) &=& \{-\sqrt{2}^{[0]}, \sqrt{2}^{[0]}, 0^{[3]}\} \cup \sigma(\widetilde{\bf W}(G))\\
            &=& \{-\sqrt{2}^{[1]}, \sqrt{2}^{[1]}, 0^{[3]}\} \cup \{\text{Roots}(m_{G^2}(x))\},\\
\end{eqnarray*}
where $\text{Roots}(m_{G^2}(x))$ is
\begin{scriptsize}
$$\left\{-\frac{3}{\sqrt{2}} - \sqrt{\frac{1}{2}(13-8\sqrt{2})}, -\frac{3}{\sqrt{2}} + \sqrt{\frac{1}{2}(13-8\sqrt{2})}, \frac{3}{\sqrt{2}} - \sqrt{\frac{1}{2}(13+8\sqrt{2})}, \frac{3}{\sqrt{2}} + \sqrt{\frac{1}{2}(13+8\sqrt{2})}\right\}.$$
\end{scriptsize}
Furthermore
\begin{equation*}
\phi(G^2) = \underbrace{x^3(-2+x^2)}_{non-main}\underbrace{(-28 -48x - 22x^2 + x^4)}_{main}.
\end{equation*}

\item \textbf{{Let us determine} $\sigma(G^3)=\sigma(G[G^2]$)}.
Using again a walk matrix approach to determine the coefficients of the main polynomial $m_{G^2}(x)$ \cite{hagos1}, taking into account that $A(G^2) = \left(\begin{array}{ccccccccc}
                                    0 & 1 & 1 & 1 & 1 & 1 & 1 & 1 & 1 \\
                                    1 & 0 & 0 & 1 & 1 & 1 & 1 & 1 & 1 \\
                                    1 & 0 & 0 & 1 & 1 & 1 & 1 & 1 & 1 \\
                                    1 & 1 & 1 & 0 & 1 & 1 & 0 & 0 & 0 \\
                                    1 & 1 & 1 & 1 & 0 & 0 & 0 & 0 & 0 \\
                                    1 & 1 & 1 & 1 & 0 & 0 & 0 & 0 & 0 \\
                                    1 & 1 & 1 & 0 & 0 & 0 & 0 & 1 & 1 \\
                                    1 & 1 & 1 & 0 & 0 & 0 & 1 & 0 & 0 \\
                                    1 & 1 & 1 & 0 & 0 & 0 & 1 & 0 & 0 \\
                              \end{array}\right)$, we obtain
$
{\bf W}_{G^2}=({\bf j}, A(G^2){\bf j}, A^2(G^2){\bf j}, A^3(G^2){\bf j}) = \left(\begin{array}{cccc}
                                                                          1 & 8 & 40 & 236 \\
                                                                          1 & 7 & 34 & 208 \\
                                                                          1 & 7 & 34 & 208 \\
                                                                          1 & 5 & 30 & 162 \\
                                                                          1 & 4 & 27 & 138 \\
                                                                          1 & 4 & 27 & 138 \\
                                                                          1 & 5 & 30 & 162 \\
                                                                          1 & 4 & 27 & 138 \\
                                                                          1 & 4 & 27 & 138
                                                                          \end{array}\right)
$
and the main polynomial of $G^2$ is $m_{G^2}(x) = x^4 - c_{0(2)} - c_{1(2)}x - c_{2(2)}x^2 - c_{3(2)}x^3$. From $m_{G^2}(A(G^2)){\bf j}=0$, it follows that
\begin{eqnarray*}
A^4(G^2){\bf j} &=& c_{0(2)} {\bf j} + c_{1(2)}A(G^2){\bf j} + c_{2(2)}A^2(H^2){\bf j} + c_{3(2)}A^3(G^2){\bf j}\\
                &=& {\bf W}_{G^2} \left(\begin{array}{c}
                                       c_{0(2)}\\
                                       c_{1(2)}\\
                                       c_{2(2)}\\
                                       c_{3(2)}
                          \end{array}\right).
\end{eqnarray*}
Then, we may conclude that $c_{0(2)}=28$, $c_{1(2)}=48$, $c_{2(2)}=22$, $c_{3(2)}=0$ and thus $m_{G^2}(x) = x^4 - 28 - 48x - 22x^2$,
${\bf C}(m_{G^2}) = \left(\begin{array}{ccccc}
                                 0 &   0   &   0   & 28 \\
                                 1 &   0   &   0   & 48 \\
                                 0 &   1   &   0   & 22 \\
                                 0 &   0   &   1   &  0 \\
                          \end{array}\right)$ and ${\bf M}(G^2) = \left(\begin{array}{ccc}
                                                                   & {\bf j}^T W_{G^2} &   \\
                                                                 0 &    0 \qquad 0     & 0 \\
                                                                 0 &    0 \qquad 0     & 0 \\
                                                                 0 &    0 \qquad 0     & 0
                                                        \end{array}\right) = \left(\begin{array}{cccc}
                                                                              9 & 48 & 276 & 1528 \\
                                                                              0 & 0  &  0  &  0   \\
                                                                              0 & 0  &  0  &  0   \\
                                                                              0 & 0  &  0  &  0   \\
                                                                              \end{array}\right)$.                                        Therefore,
$$
\widetilde{\bf W}(G^2) = \left(\begin{array}{cccccccccccc}
                           0 &   0   &   0   & 28   & 9 & 48 & 276 & 1528 & 9 & 48 & 276 & 1528\\
                           1 &   0   &   0   & 48   & 0 & 0  &  0  &  0   & 0 &  0 &  0  &  0 \\
                           0 &   1   &   0   & 22   & 0 & 0  &  0  &  0   & 0 &  0 &  0  &  0 \\
                           0 &   0   &   1   &  0   & 0 & 0  &  0  &  0   & 0 &  0 &  0  &  0 \\
                           9 &  48   & 276   & 1528 & 0 & 0  &  0  & 28   & 0 &  0 &  0  &  0 \\
                           0 &   0   &   0   &  0   & 1 & 0  &  0  & 48   & 0 &  0 &  0  &  0 \\
                           0 &   0   &   0   &  0   & 0 & 1  &  0  & 22   & 0 &  0 &  0  &  0 \\
                           0 &   0   &   0   &  0   & 0 & 0  &  1  &  0   & 0 &  0 &  0  &  0 \\
                           9 &  48   & 276   & 1528 & 0 & 0  &  0  &  0   & 0 &  0 &  0  &  28 \\
                           0 &   0   &   0   &  0   & 0 & 0  &  0  &  0   & 1 &  0 &  0  &  48 \\
                           0 &   0   &   0   &  0   & 0 & 0  &  0  &  0   & 0 &  1 &  0  &  22 \\
                           0 &   0   &   0   &  0   & 0 & 0  &  0  &  0   & 0 &  0 &  1  &   0 \\
                           \end{array}\right).
$$
Applying Theorem~\ref{h-join_spectra_theorem} and taking into account that the main eigenvalues of $G^2$ are $\mu_{1(2)}, \mu_{2(2)}, \mu_{3(2)}, \mu_{4(2)}$ (the roots of $m_{G^2}(x)$) it follows that the spectrum of $G^{3}=G[G^2]$, is
\begin{equation*}
\sigma(G[G^2]) = \{\mu^{[0]}_{1(2)}, \mu^{[0]}_{2(2)}, \mu^{[0]}_{3(2)}, \mu^{[0]}_{4(2)}\} \cup \{-\sqrt{2}^{[3]}, 0^{[9]}, \sqrt{2}^{[3]}\}  \cup \sigma(\widetilde{\bf W}(H^2)).
\end{equation*}
It should be noted that
\begin{eqnarray*}
\phi(\widetilde{\bf W}(G^2)) \!\!\!&=&\!\!\! 67648 + 390144 x + 979904 x^2 + 1398912 x^3 + 1238704 x^4 + \\
                             & & 691392 x^5 + 230744 x^6 + 35712 x^7 - 2484 x^8 - 1872 x^9 - \\
                             & &228 x^{10} + x^{12}\\
                             \!\!\!&=&\!\!\! (-28 - 48 x - 22 x^2 + x^4)(-2416 - 9792 x - 16312 x^2 -\\
                             & & 14304 x^3 - 
                              6988 x^4 - 1824 x^5 - 206 x^6 + x^8).
\end{eqnarray*}
Therefore,
\begin{eqnarray*}
\phi(G^3) &=& x^9(-2 + x^2)^3(-28 - 48 x - 22 x^2 + x^4)(-2416 - 9792 x - \\
          & & 16312 x^2 - 14304 x^3 - 6988 x^4 - 1824 x^5 - 206 x^6 + x^8).
\end{eqnarray*}
Observe that the lexicographic power $G^3$ has order $27$.
\end{enumerate}

\end{example}

\section{Conclusions}
This paper provides a comprehensive analysis of the spectrum of the generalized composition of graphs, focusing particularly  on the lexicographic product $H[G]$.  Using the concept of $H$-join and introducing the $H[G]$ associated matrix $\widetilde{{\bf W}}$, which relates the graphs $H$ and $G$,  we determine the spectrum of $H[G]$ for an arbitrary graph $G$, in terms of the spectrum of $G$ and the spectrum of the matrix $\widetilde{{\bf W}}$. Additionally, some relationships between the main eigenvalues of $G$ and the eigenvalues of $\widetilde{{\bf W}}$, when $H$ has nullity $\eta > 0$, are established. Namely, when the nullity of $H$ is $\eta > 0$, it is proved that every main eigenvalue of $G$ is an eigenvalue of $\widetilde{{\bf W}}$ with multiplicity at least $\eta$ which is non-main for $\bf \widetilde{W}$ if and only if $0$ is non-main as an eigenvalue of $H$. Furthermore, the investigation into the spectra of lexicographic powers $G^k$  shows that these results can be applied to understand spectral evolution under repeated lexicographic products. These findings contribute to the broader field of spectral graph theory by offering valuable tools and insights for the further exploration of graph structures and their algebraic properties.
\bigskip

\noindent
{\bf Acknowledgments.} This research is supported by the Portuguese Foundation for Science	and Technology,	through the CIDMA - Center for Research and Development in Mathematics and Applications, within project UID/04106.

\end{document}